\newtheorem{theorem}{Theorem}
\newcommand{\B}{\mathcal B}
\newcommand{\R}{\mathbb R}
\newcommand{\ce}{\mathfrak c}
\author{Marek Balcerzak}
\address{Institute of Mathematics, Lodz University of Technology, W\'olcza\'nska 215, 93-005
\L\'od\'z, Poland}
\email{marek.balcerzak@p.lodz.pl}
\author{Szymon G\l \c ab}
\address{Institute of Mathematics, Lodz University of Technology, W\'olcza\'nska 215, 93-005
\L\'od\'z, Poland}
\email {szymon.glab@p.lodz.pl}
\title{A lower density operator for the Borel algebra}
\subjclass{Primary: 28A51; Secondary: 03E50, 03E35}
\keywords{Borel lifting, lower density operator, Continuum Hypothesis}
\date{}
\begin{document}

\begin{abstract}
We prove that the existence of a Borel lower density operator (a Borel lifting) with respect to the $\sigma$-ideal
of countable sets, for an uncountable Polish space, is equivalent to the Continuum Hypothesis.
\end{abstract}
\maketitle

Let $S$ be a $\sigma$-algebra of subsets of a nonempty set $X$ and let $J\subseteq S$ be a $\sigma$-ideal.
We write $A\sim B$ whenever $A\bigtriangleup B\in J$.
A mapping
$\Phi\colon S\to S$ is called a {\em lower density operator} (respectively, a {\em lifting}) with respect to $J$ if it satisfies the following conditions
(1)--(4) (respectively, (1)--(5)):
\begin{itemize}
\item[(1)] $\Phi(X)=X$ and $\Phi(\emptyset)=\emptyset$,
\item[(2)] $A\sim B\implies\Phi(A)=\Phi(B)$ for every $A,B\in S$,
\item[(3)] $A\sim\Phi(A)$ for every $A\in S$,
\item[(4)] $\Phi(A\cap B)=\Phi(A)\cap\Phi(B)$ for every $A,B\in S$,
\item[(5)] $\Phi(A\cup B)=\Phi(A)\cup\Phi(B)$ for every $A,B\in S$.
\end{itemize}

The problem of the existence of liftings together with their various applications were widely discussed in the monograph \cite{IT} and in the later survey \cite{SMM}. If $S$ is the $\sigma$-algebra of Borel sets in a given Hausdorff space,
then the respective operator $\Phi$ satisfying conditions (1)--(5) is called a Borel lifting. Note that von Neumann and Stone \cite{vNS} proved the existence of a lifting for a Borel measure space on $[0,1]$ under the assumption of the continuum hypothesis ({\bf CH}). A simple proof of the same result was then given by Musia{\l} \cite{Mu}. This was later generalized by Mokobodzki \cite{Mo} and Fremlin \cite{Fr} who showed that, subject to {\bf CH}, any $\sigma$-finite measure space with the measure algebra of cardinality $\leq \omega_2$ has a lifting. On the other hand, Shelah \cite{Sh} proved that it is consistent with ZFC that there exists no Borel lifting for Lebesgue measure on $[0,1]$.

We will focus on a particular case. We assume that $S$ is the $\sigma$-algebra $\B$ of Borel subsets of an uncountable Polish space $X$ and $J$ is the $\sigma$-ideal 
$[X]^{\leq \omega}$ of all countable subsets of $X$. Since any two uncountable Borel subsets of Polish spaces are Borel isomorphic
 \cite[Thm 3.3.13]{Sr}, it does not matter which Polish space is considered.

\begin{theorem}
For an uncountable Polish space $X$, the following conditions are equivalent:
\begin{itemize}
\item[(i)] {\bf CH};
\item[(ii)] there exists a lifting $\Phi\colon\B\to\B$ with respect to $[X]^{\leq \omega}$;
\item[(iii)] there exists a lower density operator $\Phi\colon \B\to\B$ with respect to  $[X]^{\leq \omega}$.
\end{itemize}
\end{theorem}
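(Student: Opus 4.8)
\noindent\emph{Proof idea.}
The implication $(ii)\Rightarrow(iii)$ is trivial: a lifting satisfies in particular $(1)$--$(4)$.

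For $(i)\Rightarrow(ii)$ I would produce $\Phi$ in the form $\Phi=\rho\circ\pi$, where $\pi\colon\B\to\B/[X]^{\le\omega}$ is the quotient homomorphism and $\rho\colon\B/[X]^{\le\omega}\to\B$ is a Boolean homomorphism with $\pi\circ\rho=\operatorname{id}$. Any such $\rho$ does the job: $\rho$ preserves finite meets, joins and complements and takes values in $\B$, where these operations are the set operations, so $\rho\circ\pi$ satisfies $(1)$, $(2)$, $(4)$, $(5)$, while $\pi\circ\rho=\operatorname{id}$ yields $(3)$. Under {\bf CH} we have $|\B|=\cont=\aleph_1$, hence $|\B/[X]^{\le\omega}|\le\aleph_1$; fix an enumeration $\B=\{A_\xi:\xi<\omega_1\}$ and construct, by recursion on $\xi<\omega_1$, an increasing chain $(\mathcal C_\xi)_{\xi<\omega_1}$ of \emph{countable} subalgebras of $\B/[X]^{\le\omega}$ together with Boolean homomorphisms $\rho_\xi\colon\mathcal C_\xi\to\B$ satisfying $\pi\circ\rho_\xi=\operatorname{id}$, $\rho_\eta\subseteq\rho_\xi$ for $\eta<\xi$, and $\pi(A_\xi)\in\mathcal C_{\xi+1}$. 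Limit stages are handled by unions (a countable union of the $\mathcal C_\eta$ is countable, and the union of the $\rho_\eta$ is again a section); at the end $\bigcup_{\xi<\omega_1}\mathcal C_\xi=\B/[X]^{\le\omega}$, and $\rho:=\bigcup_{\xi<\omega_1}\rho_\xi$ is the desired homomorphism.

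The only nonroutine point in that recursion is the successor step: given a countable subalgebra $\mathcal C$ of $\B/[X]^{\le\omega}$, a section $\rho$ on it, and $b\notin\mathcal C$, one must extend $\rho$ to $\mathcal C[b]=\{(b\wedge c_1)\vee(b^{c}\wedge c_2):c_1,c_2\in\mathcal C\}$. Put $U=\bigcup\{\rho(c):c\in\mathcal C,\ c\le b\}$ and $V=\bigcup\{\rho(c):c\in\mathcal C,\ c\wedge b=0\}$; both are Borel as countable unions, and $U\cap V=\emptyset$, since $c_1\le b$ and $c_2\wedge b=0$ force $c_1\wedge c_2=0$, so $\rho(c_1)\cap\rho(c_2)=\rho(0)=\emptyset$. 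Choosing any $B_0$ with $\pi(B_0)=b$ and setting $B=(B_0\cup U)\setminus V$, one checks $\pi(B)=b$, $U\subseteq B$ and $B\cap V=\emptyset$; these three facts are exactly what is needed for the assignment $\rho(b):=B$ and $\rho\big((b\wedge c_1)\vee(b^{c}\wedge c_2)\big):=(B\cap\rho(c_1))\cup((X\setminus B)\cap\rho(c_2))$ to be well defined, to be a Boolean homomorphism on $\mathcal C[b]$ extending $\rho$, and to still satisfy $\pi\circ\rho=\operatorname{id}$.

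The substantial direction is $(iii)\Rightarrow(i)$, and this is where I expect the real difficulty. Using the Borel isomorphism theorem quoted above, assume $X=2^{\N}$, let $\Phi$ be a lower density operator, and suppose towards a contradiction that {\bf CH} fails, so by the perfect set property every uncountable Borel set has cardinality $\cont>\aleph_1$. I would first extract the structural features forced by $(1)$--$(4)$: $\Phi$ is idempotent, is constant on $\sim$-classes and hence factors through $\B/[X]^{\le\omega}$, is monotone, kills all countable sets, and for disjoint $A,B$ the sets $\Phi(A),\Phi(B)$ are disjoint while $\Phi(A)\cup\Phi(B)$ differs from $A\cup B$ by a countable set. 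One natural line of attack is then a recursion of length $\omega_1$ producing uncountable Borel sets $A_\xi$ together with points witnessing the strict behaviour of $\Phi$ at each stage, arranged so that either the countable defect sets $A_\xi\bigtriangleup\Phi(A_\xi)$ accumulated along the way exhaust $X$ — which would give $\cont\le\aleph_1$ — or else one reaches a direct contradiction with $(4)$ or with monotonicity in the limit. Converting the algebra of $\Phi$ on an $\omega_1$-indexed family of Borel sets into a genuine bound on $\cont$ is the crux; it should exploit that $\B/[X]^{\le\omega}$ is $\sigma$-complete but, under $\neg${\bf CH}, very far from $\aleph_1$-complete, and this is the step I expect to demand the most care.
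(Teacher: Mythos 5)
Your handling of (ii)$\implies$(iii) is fine, and your proof of (i)$\implies$(ii) is correct and essentially complete: it is the von Neumann--Stone/Musia{\l} construction of a Boolean section $\rho$ of the quotient map $\B\to\B/[X]^{\leq\omega}$ by an $\omega_1$-recursion through countable subalgebras, and your successor-step verification (the sets $U,V$ and the choice $B=(B_0\cup U)\setminus V$, together with the well-definedness check via $B\cap V=\emptyset$ and $U\subseteq B$) is exactly the point that needs care. The paper simply cites Musia{\l} for this implication, so here you supply more detail than the paper itself.

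The gap is in (iii)$\implies$(i), which you correctly identify as the substantial direction but do not actually prove. Your proposed dichotomy (``either the countable defect sets $A_\xi\bigtriangleup\Phi(A_\xi)$ exhaust $X$ \dots\ or else one reaches a direct contradiction'') does not work as stated: under $\neg$\textbf{CH} an $\omega_1$-indexed family of countable sets has union of cardinality at most $\omega_1<\cont$, so the first horn can never occur, and the second horn is not an argument but a hope. The paper's actual mechanism is a concrete two-coordinate trick. Work in $X=\R\times\R$, enumerate $\R=\{x_\alpha\colon\alpha<\ce\}$, and set $P_\alpha=\{x_\alpha\}\times\R$ and $Q_\alpha=\Phi(P_\alpha)$. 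By (1) and (4) the $Q_\alpha$ are pairwise disjoint, and by (3) each $P_\alpha\setminus Q_\alpha$ is countable, so $\pi_2[Q_\alpha]$ misses only countably many reals. Now $\neg$\textbf{CH} enters through counting: if every $x\in\R$ lay in $\pi_2[Q_\alpha]$ for only countably many $\alpha$, then the $\omega_1$ countable sets $L_\alpha=\{\beta\colon x_\alpha\in\pi_2[Q_\beta]\}$, $\alpha<\omega_1$, would have union of size $\leq\omega_1<\ce$, and any $\xi$ outside that union would satisfy $\{x_\alpha\colon\alpha<\omega_1\}\subseteq\pi_2[P_\xi\setminus Q_\xi]$, contradicting $|P_\xi\setminus Q_\xi|\leq\omega$. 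Hence some $x$ lies in $\pi_2[Q_\alpha]$ for uncountably many $\alpha$; the horizontal line $P=\R\times\{x\}$ then meets uncountably many of the pairwise disjoint sets $Q_\alpha$, whereas $\Phi(P)\cap Q_\alpha=\Phi(P\cap P_\alpha)=\Phi(\emptyset)=\emptyset$ for every $\alpha$ because $P\cap P_\alpha$ is a singleton (conditions (4), (2), (1)). Thus $P\setminus\Phi(P)$ is uncountable, contradicting (3). Your instinct that the proof must pit $\omega_1$ against $\ce$ is right, but the realization is this transversal argument on a $\ce$-indexed disjoint family, not an $\omega_1$-recursion on abstract Borel sets; as submitted, your proof of the theorem is incomplete.
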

\begin{proof}
Implication (i)$\implies$(ii) follows from \cite[Thm 1]{Mu}. Implication (ii)$\implies$(iii) is obvious.

To prove (iii)$\implies$(i) assume $\neg${\bf CH}. We work with $X:=\R\times\R$. Enumerate $\R$ as $\{x_\alpha\colon\alpha<\mathfrak{c}\}$. Suppose that $\Phi\colon\mathcal{B}\to\mathcal{B}$ is a lower density operator with respect to $[X]^{\leq \omega}$. Let $Q_\alpha:=\Phi(P_\alpha)$ where $P_\alpha:=\{x_\alpha\}\times \R$ for $\alpha<\mathfrak{c}$. Note that if $\alpha\neq\beta$ then $Q_\alpha\cap Q_\beta=\Phi (P_\alpha\cap P_\beta)=\emptyset$ by (4) and (1). Let $\pi_2\colon \R\times \R\to \R$ be given by $\pi_2(x,y):=y$.

{\bf Claim.} \emph{There is $x\in \R$ such that $\{\alpha<\ce\colon x\in \pi_2[Q_\alpha]\}$ is uncountable.}

{\bf Proof of Claim.} Suppose that $\vert\{\alpha<\ce\colon x\in \pi_2[Q_\alpha]\}\vert\leq\omega$ for each $x\in \R$. Let $L_\alpha:=\{\beta<\ce\colon x_\alpha\in \pi_2[Q_\beta]\}$ for $\alpha<\ce$. Then $\vert\bigcup_{\alpha<\omega_1}L_\alpha\vert\leq\omega_1$ by our supposition. By $\neg${\bf CH}, the set $\ce\setminus\bigcup_{\alpha<\omega_1}L_\alpha$
is nonempty (of cardinality $\ce$).  Moreover, $\{x_\alpha\colon\alpha<\omega_1\}\cap \pi_2[Q_\xi]
=\emptyset$ for each $\xi\in\ce\setminus\bigcup_{\alpha<\omega_1}L_\alpha$. Thus $\{x_\alpha\colon\alpha<\omega_1\}\subseteq\R\setminus \pi_2[Q_\xi]=\pi_2[P_\xi]\setminus \pi_2[Q_\xi]\subseteq\pi_2[P_\xi\setminus Q_\xi]$ which gives a contradiction since $|\pi_2[P_\xi\setminus Q_\xi]|\leq |P_\xi\setminus\Phi(P_\xi)|\leq\omega$ by (3). 

Take $x\in \R$ as in the Claim. Consider the closed set $P:=\R\times\{x\}$. Then $|P\cap P_\alpha|=1$ and
$\Phi(P)\cap Q_\alpha=\Phi(P)\cap\Phi(P_\alpha)=\Phi(P\cap P_\alpha)=\emptyset$ for each $\alpha<\ce$, by (4), (2) and (1). Therefore 
$\Phi(P)\cap\bigcup_{\alpha <\ce}Q_\alpha=\emptyset$. On the other hand, $\vert P\cap\bigcup_{\alpha <\ce}Q_\alpha\vert>\omega$ by the choice of $x$. Thus $P\setminus\Phi(P)$ is uncountable and we reach a contradiction with (3). 
\end{proof}

Note that implication (iii)$\implies$(ii) follows from \cite[Theorem 2.8]{HLL}.

The above theorem answers a question posed by Jacek Hejduk during his invited talk given on the Conference on Real Function Theory
in Star\'a Lesn\'a in September 2016. He asked about the existence of a lower density operator on $\B(\R)$ with respect to $[\R]^{\leq \omega}$. Let us mention that lower density operators play an important role in  constructions of density like topologies; see \cite{W}, \cite{HLL}.

{\bf Acknowledgement.}
The first author would like to thank Kazimierz Musia{\l} and Jacek Hejduk for their useful comments.

\end{document}